\numberwithin{equation}{section}
\newtheorem{theorem}{Theorem}
\newtheorem{proposition}[theorem]{Proposition}
\newtheorem{definition}[theorem]{Definition}
\theoremstyle{definition}
\newtheorem{example}[theorem]{Example}
\newtheorem{remark}[theorem]{Remark}
\title{\textbf{Computing finite presentations \\ of $Tor$ and $Ext$ over skew $PBW$ extensions and some applications}}
\author{Oswaldo Lezama \& Melisa Paiba
\\
Seminario de Álgebra Constructiva - $\text{SAC}^2$\\
Departamento de Matemáticas\\
Universidad Nacional de Colombia, Bogotá, COLOMBIA\\
}
\date{}
\begin{document}
\maketitle
\begin{abstract}\noindent
In this paper we compute the $Tor$ and $Ext$ modules over skew $PBW$ extensions. If $A$ is a
bijective skew $PBW$ extension of a ring $R$, we give presentations of $Tor_r^{A}(M,N)$, where $M$
is a finitely generated centralizing subbimodule of $A^m$, $m\geq 1$, and $N$ is a left
$A$-submodule of $A^l$, $l\geq 1$. In the case of $Ext_A^{r}(M,N)$, $M$ is a left $A$-submodule of
$A^m$ and $N$ is a finitely generated centralizing subbimodule of $A^l$. As application of these
computations, we test stably-freeness, reflexiveness, and we will compute also the torsion, the
dual and the grade of a given submodule of $A^m$. Skew $PBW$ extensions include many important
classes of non-commutative rings and algebras arising in quantum mechanics, for example, Weyl
algebras, enveloping algebras of finite-dimensional Lie algebras (and its quantization), Artamonov
quantum polynomials, diffusion algebras, Manin algebra of quantum matrices, among many others.

\bigskip

\noindent \textit{Key words and phrases.} Skew $PBW$ extensions, noncommutative Gröbner bases,
module of syzygies, finite presentations of modules, $Ext$, $Tor$, stably-free modules, reflexive
modules, torsion, grade of a module.

\bigskip

\bigskip

\noindent 2010 \textit{Mathematics Subject Classification.} Primary: 16Z05. Secondary: 18G15.
\end{abstract}

\newpage

\section{Preliminaries}\label{sec12.5}

In this introductory section we recall the definition of skew $PBW$ extensions and summarize some
tools studied in previous works needed for the rest of the paper. In particular, we will review the
computation of the module of syzygies and finite presentations of modules over bijective skew $PBW$
extensions. The effective computation of these homological objects has been done in some recent
works using Gröbner bases. It is important to remark that the theory of Gröbner bases for ideals
and modules (left and right) has been completely constructed for this type of non-commutative
rings. The Gröbner theory for skew $PBW$ extensions and the details of the results presented in
this section can be consulted in \cite{lezama-gallego-projective}, \cite{gallego-thesis} and
\cite{Jimenez2} (see also \cite{Gallego2}).

The computations presented in the paper were inspired by the similar calculations done in the
beautiful papers \cite{Gomez-Torrecillas}, \cite{Gomez-Torrecillas2} and \cite{Gomez-Torrecillas6}
for $PBW$ rings and modules.

\subsection{Skew $PBW$ extensions}

We start recalling the definition of skew $PBW$ extensions; this class of non-commutative rings of
polynomial type was introduced in \cite{Gallego2}.

\begin{definition}\label{gpbwextension}
Let $R$ and $A$ be rings, we say that $A$ is a skew $PBW$ extension of $R$ {\rm(}also called
$\sigma-PBW$ extension{\rm)}, if the following conditions hold:
\begin{enumerate}
\item[\rm (i)]$R\subseteq A$.
\item[\rm (ii)]There exist finitely many elements $x_1,\dots ,x_n\in A$ such $A$ is a left $R$-free module with basis
\begin{center}
$Mon(A):=\{x_1^{\alpha_1}\cdots x_n^{\alpha_n}|\alpha=(\alpha_1,\dots ,\alpha_n)\in
\mathbb{N}^n\}$.
\end{center}
\item[\rm (iii)]For every $1\leq i\leq n$ and $r\in R-\{0\}$ there exists $c_{i,r}\in R-\{0\}$ such that
\begin{equation*}\label{sigmadefinicion1}
x_ir-c_{i,r}x_i\in R.
\end{equation*}
\item[\rm (iv)]For every $1\leq i,j\leq n$ there exists $c_{i,j}\in R-\{0\}$ such that
\begin{equation*}\label{sigmadefinicion2}
x_jx_i-c_{i,j}x_ix_j\in R+Rx_1+\cdots +Rx_n.
\end{equation*}
Under these conditions we will write $A=\sigma(R)\langle x_1,\dots ,x_n\rangle$.
\end{enumerate}
\end{definition}

\begin{proposition}\label{sigmadefinition}
Let $A$ be a skew $PBW$ extension of $R$. Then, for every $1\leq i\leq n$, there exist an injective
ring endomorphism $\sigma_i:R\rightarrow R$ and a $\sigma_i$-derivation $\delta_i:R\rightarrow R$
such that
\begin{center}
$x_ir=\sigma_i(r)x_i+\delta_i(r)$,
\end{center}
for each $r\in R$.
\end{proposition}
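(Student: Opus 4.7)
The plan is to extract $\sigma_i$ and $\delta_i$ directly from axiom (iii) and then use the uniqueness of the representation in the free basis $Mon(A)$ to check all of the required properties.

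First I would set up the definitions. Fix $i$. For $r\in R\setminus\{0\}$, axiom (iii) gives a unique $c_{i,r}\in R\setminus\{0\}$ and a unique element $d_{i,r}\in R$ such that $x_ir=c_{i,r}x_i+d_{i,r}$; the uniqueness here is forced by axiom (ii), since $1$ and $x_i$ belong to the left $R$-basis $Mon(A)$ and hence no nontrivial $R$-linear relation holds between them. Define $\sigma_i(r):=c_{i,r}$ and $\delta_i(r):=d_{i,r}$, with $\sigma_i(0):=0$, $\delta_i(0):=0$. By construction the identity $x_ir=\sigma_i(r)x_i+\delta_i(r)$ holds for every $r\in R$.

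Next I would verify that $\sigma_i$ is a ring endomorphism and that $\delta_i$ is a $\sigma_i$-derivation, by applying the identity twice and reading off coefficients in the basis $\{1,x_i,\dots\}$. For additivity, computing $x_i(r+s)$ in two ways gives $\sigma_i(r+s)x_i+\delta_i(r+s)=(\sigma_i(r)+\sigma_i(s))x_i+(\delta_i(r)+\delta_i(s))$, and uniqueness yields additivity of both $\sigma_i$ and $\delta_i$. For multiplicativity, I would expand $x_i(rs)=(x_ir)s=(\sigma_i(r)x_i+\delta_i(r))s=\sigma_i(r)(\sigma_i(s)x_i+\delta_i(s))+\delta_i(r)s$, so comparing with $x_i(rs)=\sigma_i(rs)x_i+\delta_i(rs)$ and using uniqueness gives simultaneously
\[
\sigma_i(rs)=\sigma_i(r)\sigma_i(s),\qquad \delta_i(rs)=\sigma_i(r)\delta_i(s)+\delta_i(r)s,
\]
which is exactly the $\sigma_i$-derivation rule. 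Evaluating at $r=1$ in $x_i\cdot 1=x_i$ forces $\sigma_i(1)=1$ and $\delta_i(1)=0$.

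Finally, injectivity of $\sigma_i$ is immediate: axiom (iii) explicitly requires $c_{i,r}\in R\setminus\{0\}$ whenever $r\neq 0$, so $\sigma_i(r)=0$ implies $r=0$. There is no genuinely hard step; the only point to be careful with is the uniqueness of the decomposition $x_ir=\alpha x_i+\beta$ with $\alpha,\beta\in R$, which is the one place where axiom (ii) (the $R$-freeness of $A$ on $Mon(A)$) is used, and which underpins every subsequent coefficient comparison.
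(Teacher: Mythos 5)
Your proof is correct and is exactly the standard argument: the paper itself does not reproduce a proof but defers to the cited reference \cite{Gallego2}, where the proposition is established precisely as you do it, by reading off the coefficients of $1$ and $x_i$ in the unique $R$-linear expansion guaranteed by the freeness of $A$ on $Mon(A)$, and then comparing the two expansions of $x_i(r+s)$ and $x_i(rs)$. The points you flag as the ones needing care (uniqueness of the decomposition $x_ir=\alpha x_i+\beta$, and injectivity coming from $c_{i,r}\neq 0$ together with additivity) are indeed the only substantive steps, and you handle both correctly.
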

\begin{proof}
See \cite{Gallego2}.
\end{proof}
Two important particular cases of skew $PBW$ extensions are the following.
\begin{definition}\label{sigmapbwderivationtype}
Let $A$ be a skew $PBW$ extension.
\begin{enumerate}
\item[\rm (a)]
$A$ is quasi-commutative if the conditions {\rm(}iii{\rm)} and {\rm(}iv{\rm)} in Definition
\ref{gpbwextension} are replaced by
\begin{enumerate}
\item[\rm ($iii'$)]For every $1\leq i\leq n$ and $r\in R-\{0\}$ there exists $c_{i,r}\in R-\{0\}$ such that
\begin{equation*}
x_ir=c_{i,r}x_i.
\end{equation*}
\item[\rm ($iv'$)]For every $1\leq i,j\leq n$ there exists $c_{i,j}\in R-\{0\}$ such that
\begin{equation*}
x_jx_i=c_{i,j}x_ix_j.
\end{equation*}
\end{enumerate}
\item[\rm (b)]$A$ is bijective if $\sigma_i$ is bijective for
every $1\leq i\leq n$ and $c_{i,j}$ is invertible for any $1\leq i<j\leq n$.
\end{enumerate}
\end{definition}
\begin{remark}\label{rem11.5.7}
(i) Skew $PBW$ extensions include many important classes of non-commutative rings and algebras
arising in quantum mechanics, for example, Weyl algebras, enveloping algebras of finite-dimensional
Lie algebras, Artamonov quantum polynomials, diffusion algebras, Manin algebra of quantum matrices.
An extensive list of examples of skew $PBW$ extensions can be found in \cite{Lezama3}. Some other
authors have classified and characterized quantum algebras and other non-commutative rings of
polynomial type by similar notions: In \cite{Levandovskyy} are defined the $G$-algebras, Bueso,
Gómez-Torrecillas and Verschoren in \cite{Gomez-Torrecillas2} introduced the $PBW$ rings, Panov in
\cite{Panov} defined the so called $Q$-solvable algebras.

(ii) In Definition \ref{gpbwextension} we assumed that $A$ is a left $R$-module; note that we can
also define right skew $PBW$ extensions adapting Definition \ref{gpbwextension} with the elements
of $R$ on the right and assuming that $A$ is free right $R$-module with basis $Mon(A)$. However,
note that $A$ is a bijective left skew $PBW$ extension of $R$ if and only if $A$ is bijective right
skew $PBW$ extension of $R$ (see \cite{gallego-thesis}). Observe that in the right case, the
automorphisms in Proposition \ref{sigmadefinition} are $\sigma_i^{-1}$ and the constants in numeral
(iv) of Definition \ref{gpbwextension} are $c_{i,j}^{-1}$, $1\leq i,j\leq n$.

(iii) With respect to the Gröbner theory it is necessary to make the following remark: if $A$ is a
bijective skew $PBW$ extension of a ring $R$ (we mean left if nothing contrary is said), then $A$
is also a bijective right skew $PBW$ extension of $R$, and therefore, we have a left and a right
division algorithm. Obviously, if the elements of $A$ are given by their left standard polynomial
representation, we may have to rewrite them in their right standard polynomial representation in
order to be able to perform right divisions. Left and right versions of Buchberger's algorithm are
also available. Thus, the theory of Gröbner bases has its right counterpart for $A$ (see
\cite{gallego-thesis}).
\end{remark}

\subsection{Syzygy of a module}\label{31}

In this paper, if nothing contrary is assumed, $A=\sigma(R)\langle x_1,\dots,x_n\rangle$ is a
bijective skew $PBW$ extension of $R$;  $A^m$, $m\geq 1$, represents the left free $A$-module of
column vectors of length $m\geq 1$. Since $A$ is bijective, $A$ is a left Noetherian ring (see
\cite{Lezama3}), and hence $A$ is an $IBN$ ring (\textit{Invariant Basis Number}), so all bases of
the free module $A^m$ have $m$ elements. Note moreover that $A^m$ is Noetherian, and hence, any
submodule of $A^m$ is finitely generated.

In this section we recall the definition of the module of syzygies of a submodule $M=\langle
\textbf{\emph{f}}_1,\dots ,\textbf{\emph{f}}_s\rangle$ of $A^m$. Let $f$ be the canonical
homomorphism defined by
\begin{align*}
A^s& \xrightarrow{f} A^m\\
\textbf{\emph{e}}_j& \mapsto \textbf{\emph{f}}_j
\end{align*}
where $\{\textbf{\emph{e}}_1,\dots,\textbf{\emph{e}}_s\}$ is the
canonical basis of $A^s$. Observe that $f$ can be represented by a
matrix, i.e., if $\textbf{\emph{f}}_j:=(f_{1j},\dots,f_{mj})^T$,
then the matrix of $f$ in the canonical bases of $A^s$ and $A^m$
is
\begin{center}
$F:=\begin{bmatrix}\textbf{\emph{f}}_1 & \cdots &
\textbf{\emph{f}}_s\end{bmatrix}=
\begin{bmatrix}
f_{11} & \cdots & f_{1s}\\
\vdots & & \vdots\\
f_{m1} & \cdots & f_{ms}
\end{bmatrix}\in M_{m\times s}(A)$.
\end{center}
Note that $Im(f)$ is the column module of $F$, i.e., the left
$A$-module generated by the columns of $F$:
\begin{center}
$Im(f)=\langle
f(\textbf{\emph{e}}_1),\dots,f(\textbf{\emph{e}}_s)\rangle=\langle
\textbf{\emph{f}}_1,\dots ,\textbf{\emph{f}}_s \rangle =\langle F
\rangle$.
\end{center}
Moreover, observe that if $\textbf{\emph{a}}:=(a_1,\dots,a_s)^T\in
A^s$, then
\begin{equation*}\label{equ1.5}
f(\textbf{\emph{a}})=(\textbf{\emph{a}}^TF^T)^T.
\end{equation*}
By definition
\begin{center}
$Syz(\{\textbf{\emph{f}}_1,\dots
,\textbf{\emph{f}}_s\}):=\{\textbf{\emph{a}}:=(a_1,\dots,a_s)^T\in
A^s|a_1\textbf{\emph{f}}_1+\cdots+a_s\textbf{\emph{f}}_s=\textbf{0}\}$.
\end{center}
Note that
\begin{equation*}
Syz(\{\textbf{\emph{f}}_1,\dots ,\textbf{\emph{f}}_s\})=\ker(f),
\end{equation*}
but $Syz(\{\textbf{\emph{f}}_1,\dots ,\textbf{\emph{f}}_s\})\neq
\ker(F)$ since we have
\begin{equation*}\label{313}
\textbf{\emph{a}}\in Syz(\{\textbf{\emph{f}}_1,\dots
,\textbf{\emph{f}}_s\})\Leftrightarrow
\textbf{\emph{a}}^TF^T=\textbf{0}.
\end{equation*}
The modules of syzygies of $M$ and $F$ are defined by
\begin{equation*}
Syz(M):=Syz(F):=Syz(\{\textbf{\emph{f}}_1,\dots
,\textbf{\emph{f}}_s\}).
\end{equation*}
The generators of $Syz(F)$ can be disposed into a matrix, so
sometimes we will refer to $Syz(F)$ as a matrix. Thus, if $Syz(F)$
is generated by $r$ vectors, $\textbf{\emph{z}}_1, \dots,
\textbf{\emph{z}}_r$, then
\begin{center}
$Syz(F)=\langle \textbf{\emph{z}}_1, \dots,
\textbf{\emph{z}}_r\rangle$,
\end{center}
and we will use also the following matrix notation
\begin{center}
$Syz(F)=\begin{bmatrix}\textbf{\emph{z}}_1 & \cdots & \textbf{\emph{z}}_r
\end{bmatrix}=\begin{bmatrix}z_{11} & \cdots & z_{1r}\\
\vdots & & \vdots\\
z_{s1} & \cdots & z_{sr}\end{bmatrix}\in M_{s\times r}(A)$.
\end{center}

\subsection{Presentation of a module}

\noindent Let $M=\langle \textbf{\emph{f}}_1,\dots ,\textbf{\emph{f}}_s\rangle$ be a submodule of
$A^m$, there exists a natural surjective homomorphism $\pi_M:A^s\longrightarrow M$ defined by
$\pi_M (\textbf{\emph{e}}_i):=\textbf{\emph{f}}_i$, where $\{\textbf{\emph{e}}_i\}_{1\leq i\leq s}$
is the canonical basis of $A^s$. We have the isomorphism $\overline{\pi_M}:A^s/\ker(\pi_M)\cong M$,
defined by $\overline{\pi_M}(\overline{\textbf{\emph{e}}_i}):=\textbf{\emph{f}}_i$, where
$\overline{\textbf{\emph{e}}_i}:=\textbf{\emph{e}}_i+\ker(\pi_M)$. Since $A^s$ is a Noetherian $A$-
module, $\ker(\pi_M)$ is also a finitely generated module, $\ker(\pi_M):=\langle
\textbf{\emph{h}}_1,\dots ,\textbf{\emph{h}}_{s_1}\rangle$, and hence, we have the exact sequence
\begin{equation}\label{presentation}
A^{s_1}\xrightarrow {\delta_M}A^s\xrightarrow {\pi_M} M\longrightarrow 0,
\end{equation}
with $\delta_M:=l_M\circ \pi_M'$, where $l_M$ is the inclusion of $\ker(\pi_M)$ in $A^s$ and
$\pi_M'$ is the natural surjective homomorphism from $A^{s_1}$ to $\ker(\pi_M)$. We note that
$\ker(\pi_M)=Syz(M)=Syz(F)$, where $F=[\textbf{\emph{f}}_1 \cdots \textbf{\emph{f}}_s]\in M_{m
\times s}(A)$
\begin{definition}
It is said that $A^s/Syz(M)$ is a presentation of $M$. It is said also that the sequence
{\rm(}\ref{presentation}{\rm)} is a finite presentation of $M$, and $M$ is a finitely presented
module.
\end{definition}
Let $\Delta_M$ be the matrix of $\delta_M$ in the canonical bases of $A^{s_1}$ and $A^s$; since $Im
(\delta_M)=\ker (\pi_M)$, then
\[\Delta_M =
\begin{bmatrix}
\boldsymbol{h}_{1} &\cdots &\boldsymbol{h}_{s_1}
\end{bmatrix}
=
\begin{bmatrix}
h_{11} &\cdots &h_{1s_1}\\
\vdots & &\vdots\\
h_{s1} &\cdots &h_{ss_1}
\end{bmatrix}
\in M_{s \times s_1}(A),\] and hence, the columns of $\Delta_M$ are the generators of $Syz(F)$,
i.e.,
\begin{center}
$\Delta_M = Syz(F)$.
\end{center}
\begin{definition}
With the previous notation, it is said that $\Delta_M$ is a matrix presentation of $M$.
\end{definition}
We can also compute presentations of quotient modules. Indeed, let $N\subseteq M$ be submodules of
$A^m$, where $M=\langle \textbf{\emph{f}}_1,\dots ,\textbf{\emph{f}}_s\rangle$, $N=\langle
\textbf{\emph{g}}_1,\dots, \textbf{\emph{g}}_t\rangle$ and $M/N=$ $\langle
\overline{\textbf{\emph{f}}_1},\dots ,\overline{\textbf{\emph{f}}_s}\rangle$, then we have a
canonical surjective homomorphism $A^s\longrightarrow M/N$ such that a presentation of $M/N$ is
given by
\begin{center}
$M/N\cong A^s/Syz(M/N)$.
\end{center}
But $Syz(M/N)$ can be computed in the following way:
$\textbf{\emph{h}}=(h_1,\dots ,h_s)^T\in Syz(M/N)$ if and only if $h_1\textbf{\emph{f}}_1+\cdots
+h_s\textbf{\emph{f}}_s\in \langle \textbf{\emph{g}}_1,\dots ,\textbf{\emph{g}}_t\rangle$ if and
only if there exist $h_{s+1},\dots ,h_{s+t}\in A$ such that $h_1\textbf{\emph{f}}_1+\cdots
+h_s\textbf{\emph{f}}_s+h_{s+1}\textbf{\emph{g}}_1+\cdots +h_{s+t}\textbf{\emph{g}}_t=\textbf{0}$
if and only if $(h_1,\dots,h_s,$ $h_{s+1},\dots ,h_{s+t})\in Syz(H)$, where
\begin{center}
$H:=[\textbf{\emph{f}}_1 \,\, \cdots \,\, \textbf{\emph{f}}_s \, \, \textbf{\emph{g}}_1\,\, \cdots
\,\,\textbf{\emph{g}}_t]$.
\end{center}
\begin{proposition}\label{quotient}
With the notation above, a presentation of $M/N$ is given by $A^s/Syz(M/N)$, where a set of
generators of $Syz(M/N)$ are the first $s$ coordinates of generators of $Syz(H)$. Thus, a finite
presentation of $M/N$ is effective computable.
\end{proposition}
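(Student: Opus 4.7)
The plan is to unpack the chain of equivalences already recorded in the paragraph immediately preceding the statement, and combine it with the computability of $Syz(H)$ coming from the Gr\"obner theory for bijective skew $PBW$ extensions recalled at the start of the section.

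First, I would justify the isomorphism $M/N\cong A^s/Syz(M/N)$. Composing the canonical surjection $\pi_M:A^s\to M$ of the previous subsection with the quotient map $M\to M/N$ yields a surjective $A$-homomorphism $\varphi:A^s\to M/N$ sending $\textbf{\emph{e}}_i$ to $\overline{\textbf{\emph{f}}_i}$. Its kernel is precisely $Syz(M/N)$ in the sense used above, namely the set of tuples $(h_1,\dots,h_s)^T$ with $h_1\textbf{\emph{f}}_1+\cdots+h_s\textbf{\emph{f}}_s\in N$. Passing to the quotient gives the presentation $A^s/Syz(M/N)\cong M/N$, which is just the exact sequence (\ref{presentation}) applied to the generating set $\{\overline{\textbf{\emph{f}}_1},\dots,\overline{\textbf{\emph{f}}_s}\}$ of $M/N$.

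Second, I would formalize the projection argument. Let $p:A^{s+t}\to A^s$ denote the $A$-linear map that keeps the first $s$ coordinates. The three biconditionals displayed before the proposition amount exactly to the set-theoretic identity $p(Syz(H))=Syz(M/N)$. Since $p$ is a module homomorphism, if $Syz(H)=\langle \textbf{\emph{z}}_1,\dots,\textbf{\emph{z}}_r\rangle$, then $Syz(M/N)=p(Syz(H))=\langle p(\textbf{\emph{z}}_1),\dots,p(\textbf{\emph{z}}_r)\rangle$, which is precisely the claim that the first $s$ coordinates of a generating set of $Syz(H)$ generate $Syz(M/N)$.

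Finally, for effectiveness, I would invoke the Gr\"obner machinery for bijective skew $PBW$ extensions described in the preliminaries: it provides an algorithm that, given any matrix over $A$, computes a finite set of generators of its module of syzygies. Applying that algorithm to the concatenated matrix $H=[\textbf{\emph{f}}_1\ \cdots\ \textbf{\emph{f}}_s\ \textbf{\emph{g}}_1\ \cdots\ \textbf{\emph{g}}_t]$ and truncating each generator to its first $s$ coordinates produces an explicit matrix presentation of $M/N$. I do not expect a serious obstacle here: the only point that needs care is verifying that $p$ restricted to $Syz(H)$ really surjects onto $Syz(M/N)$, which is handled by the two-sided implication already spelled out in the text.
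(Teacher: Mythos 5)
Your proposal is correct and follows essentially the same route as the paper, whose justification for this proposition is exactly the chain of equivalences displayed in the preceding paragraph; you merely make explicit the (correct) observation that the coordinate projection $p:A^{s+t}\to A^s$ is $A$-linear, hence carries a generating set of $Syz(H)$ onto a generating set of $p(Syz(H))=Syz(M/N)$, and that $Syz(H)$ is computable by the Gr\"obner-basis syzygy algorithm recalled in Remark \ref{12.7.7}.
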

\begin{remark}\label{12.7.7}
(i) A procedure for computing $Syz(M)$ using Gröbner bases can be found in \cite{gallego-thesis}
and \cite{Jimenez2}. Thus, since the module $Syz(M)$ is computable, then a presentation of $M$ is
computable as well as $\Delta_M$ and $Syz(M/N)$

(ii) From Remark \ref{rem11.5.7} we conclude that the applications established for left  modules
have also their right version. Thus, if $A$ is a bijective skew $PBW$ extension, then $A$ is a
right $PBW$ extension; the right $A$-module of syzygies of a right $A$-submodule $M$ of the free
right $A$-module $A^m$ will be denoted by $Syz^r(M)$, and all results about right syzygies and
presentations are valid. For example, if $M=\langle \textbf{\emph{f}}_1,\dots
,\textbf{\emph{f}}_s\rangle$ is a submodule of the right $A$-module $A^m$ and $\pi_M:A^s\to M$ is
the canonical homomorphism defined by $\pi_M(\textbf{\emph{e}}_i):=\textbf{\emph{f}}_i$, $1\leq
i\leq s$, then $f(\textbf{\emph{a}})=F\textbf{\emph{a}}$ and
\begin{center}
$\ker(\pi_M)=Syz^r(M)=Syz^r(\{\textbf{\emph{f}}_1,\dots ,\textbf{\emph{f}}_s\})=Syz^r(F)=\ker(F)$,
\end{center}
with $F:=\begin{bmatrix}\textbf{\emph{f}}_1 & \cdots & \textbf{\emph{f}}_s\end{bmatrix}$.

\end{remark}

\subsection{Centralizing bimodules}

\noindent We conclude this introductory section with another tool needed for proving the main
results of the paper.

\begin{definition}[\cite{Gomez-Torrecillas2}]
Let $M$ be an $A$-bimodule. The centralizer of $M$ is defined by
\begin{center}
$Cen_A(M):=\{\textbf{f}\in M|\textbf{f}\,a=a\textbf{f}, \ \text{for any $a\in A$}\}$.
\end{center}
It is said that $M$ is centralizing if $M$ is generated as left $A$-module $($or, equivalently, as
right $A$-module$)$ by its centralizer. $M$ is a finitely generated centralizing $A$-bimodule if
there exists a finite set of elements in $Cen_A(M)$ that generates $M$.
\end{definition}

\begin{example}
Let $M:=_A\langle \textbf{\emph{f}}_1,\dots, \textbf{\emph{f}}_s\rangle_A$ be a finitely generated
centralizing $A$-subbimodule of $A^m$; note that every $\textbf{\emph{f}}_i$ has the form
$\textbf{\emph{f}}_i=(a_1,\dots,a_m)$, with $a_j\in A$, $1\leq j\leq m$; therefore $aa_j=a_ja$ for
any $a\in A$, i.e., $a_j\in Z(A)$. The center of most of non-trivial skew $PBW$ extensions is too
small, so it is not easy to give enough non-trivial examples of this type of bimodules. However,
for the applications we will show, the centralizing bimodule is $A=_A\langle 1\rangle_A$. Anyway,
next we present an example of finitely generated centralizing subbimodule:  Consider the enveloping
algebra $A:=\mathcal{U}(\mathcal{G})$ of the $3$-dimensional Lie algebra $\mathcal{G}$ over a filed
$K$ with
\begin{center}
$[x_1,x_2]=0=[x_1,x_3]$ and $[x_2,x_3]=x_1$.
\end{center}
Observe that $x_1\in Z(\mathcal{U}(\mathcal{G}))$, and hence,
\begin{center}
$_A\langle(x_1, x_1^2+1,1),(1,x_1-1,x_1^2)\rangle_A$
\end{center}
is a centralizing subbimodule of $A^3$.

\end{example}

Since we have to consider subbimodules we will use the following special notation from now on:
$_A\langle X\rangle$ denotes the left $A$-module generated by $X$, $\langle X\rangle_A$ the right
$A$-module generated by $X$ and $_A\langle X\rangle_A$ the bimodule generated by $X$; if $X$ is
contained in the centralizer, then $_A\langle X\rangle_A=_A\langle X\rangle=\langle X\rangle_A$.

\section{Computing $Tor$ and $Ext$}

Now we pass to consider the main computations of the present paper.
\subsection{Computation of $M\otimes N$}

\noindent We start computing a presentation of the left $A$-module $M\otimes_A N$, where
$M:=_A\langle \textbf{\emph{f}}_1,\dots, \textbf{\emph{f}}_s\rangle_A$ is a finitely generated
centralizing $A$-subbimodule of $A^m$, i.e., $\textbf{\emph{f}}_i\in Cen_A(M)$, $1\leq i\leq s$,
and $N:=_A\langle \textbf{\emph{g}}_1,\dots,\textbf{\emph{g}}_t\rangle$ is a finitely generated
left $A$-submodule of $A^l$. Our presentation is given as a quotient module of $M\otimes_A N$ and
we will show an explicit set of generators for $Syz(M\otimes_A N)$. We will simplify the notation
writing $M\otimes N$ instead of $M\otimes_A N$. A preliminary well known result is needed.

\begin{proposition}\label{tensorpreliminary}Let $S$ be an arbitrary ring, $M$ be a right module over $S$ and $N$ be a left $S$-module.
Let $\textbf{m}_j\in M,\textbf{g}_j\in N$, $1\leq j\leq t$, such that $N=_A\langle
\textbf{g}_1,\dots ,\textbf{g}_t\rangle$. Then, $\textbf{m}_1\otimes \textbf{g}_1+\cdots
+\textbf{m}_t\otimes \textbf{g}_t=\textbf{\emph{0}}$ if and only if there exist elements
$\textbf{m}_v'\in M$, $1\leq v\leq r$, and a matrix $H:=[h_{jv}]\in M_{t\times r}(S)$, such that
\begin{center}
$[\textbf{m}_1 \cdots \textbf{m}_t]=[\textbf{m}_1' \cdots \textbf{m}_r']H^{T},\,\, H^T[\textbf{g}_1
\cdots \textbf{g}_t]^T=0$.
\end{center}
Thus, the left $S$-module expanded by the columns of $H$ is contained in $Syz(N)$.
\end{proposition}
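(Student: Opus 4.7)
The plan is to deduce both implications from the right-exactness of the functor $M\otimes_S -$ together with the standard isomorphism $M\otimes_S S^t\cong M^t$. Concretely, I will build the free presentation of $N$ induced by its generators $\mathbf{g}_1,\dots ,\mathbf{g}_t$ and then read off the vanishing of $\sum_j \mathbf{m}_j\otimes \mathbf{g}_j$ as membership in the image of the tensored presentation.

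The implication ($\Leftarrow$) is a direct computation using $S$-bilinearity. Writing out the matrix product $[\mathbf{m}_1\cdots\mathbf{m}_t]=[\mathbf{m}_1'\cdots\mathbf{m}_r']H^T$ gives $\mathbf{m}_j=\sum_v \mathbf{m}_v' h_{jv}$, so
\[
\sum_{j=1}^t \mathbf{m}_j\otimes \mathbf{g}_j=\sum_{j,v} \mathbf{m}_v' h_{jv}\otimes \mathbf{g}_j=\sum_{v=1}^r \mathbf{m}_v'\otimes \Bigl(\sum_{j=1}^t h_{jv}\mathbf{g}_j\Bigr)=\mathbf{0},
\]
because the condition $H^T[\mathbf{g}_1\cdots\mathbf{g}_t]^T=\mathbf{0}$ says precisely that each inner sum $\sum_j h_{jv}\mathbf{g}_j$ vanishes in $N$. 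This also shows that each column of $H$ is an element of $\ker(\pi)=Syz(N)$, which yields the last claim.

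For the implication ($\Rightarrow$), let $\pi\colon S^t\twoheadrightarrow N$ be the surjection $\mathbf{e}_j\mapsto \mathbf{g}_j$ and set $K:=\ker(\pi)$, so that $0\to K\hookrightarrow S^t\xrightarrow{\pi} N\to 0$ is exact and $K$ is exactly the module of syzygies of $\{\mathbf{g}_1,\dots,\mathbf{g}_t\}$. Applying the right-exact functor $M\otimes_S -$ gives
\[
M\otimes_S K\xrightarrow{\iota} M\otimes_S S^t\xrightarrow{1\otimes\pi} M\otimes_S N\longrightarrow 0.
\]
Under the canonical isomorphism $\varphi\colon M\otimes_S S^t\xrightarrow{\cong} M^t$, $\mathbf{m}\otimes \mathbf{e}_j\mapsto \mathbf{m}$ in the $j$-th slot, the element $\sum_j \mathbf{m}_j\otimes \mathbf{e}_j$ is sent to $(\mathbf{m}_1,\dots,\mathbf{m}_t)$, while $(1\otimes\pi)\bigl(\sum_j \mathbf{m}_j\otimes \mathbf{e}_j\bigr)=\sum_j \mathbf{m}_j\otimes \mathbf{g}_j$. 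The hypothesis that the latter vanishes forces $\sum_j \mathbf{m}_j\otimes \mathbf{e}_j\in\mathrm{Im}(\iota)$, so it admits a finite expression $\sum_{v=1}^r \mathbf{m}_v'\otimes \mathbf{k}_v$ with $\mathbf{k}_v\in K$. Writing $\mathbf{k}_v=(h_{1v},\dots,h_{tv})^T$ and assembling $H:=[h_{jv}]\in M_{t\times r}(S)$ gives both conditions: the membership $\mathbf{k}_v\in K$ is the equation $H^T[\mathbf{g}_1\cdots\mathbf{g}_t]^T=\mathbf{0}$, and comparing slot $j$ in $M^t$ yields $\mathbf{m}_j=\sum_v \mathbf{m}_v'h_{jv}$, i.e.\ the matrix identity $[\mathbf{m}_1\cdots\mathbf{m}_t]=[\mathbf{m}_1'\cdots\mathbf{m}_r']H^T$.

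The only point that requires a bit of care — and the place where I expect the most bookkeeping — is the reduction from a general element of $\mathrm{Im}(\iota)$ (which a priori is an $S$-linear combination of elementary tensors $\mathbf{m}\otimes \mathbf{k}$ with $\mathbf{k}\in K$) to a finite sum of the form $\sum_v \mathbf{m}_v'\otimes \mathbf{k}_v$ with column vectors $\mathbf{k}_v$ of $H$. This is harmless: any $S$-scalar appearing in such a combination can be absorbed into the $\mathbf{k}_v$ via the right $S$-action on $K\subseteq S^t$, after which the remaining finite list of $\mathbf{k}_v$'s gives the columns of the desired matrix $H$. No Noetherian or finite-presentation hypothesis on $S$ is used, since $r$ is allowed to depend on the particular relation being resolved.
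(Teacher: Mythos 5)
Your proof is correct. The paper itself gives no argument for this proposition---it is stated as ``a preliminary well known result''---and what you supply is exactly the standard proof: the backward direction by bilinearity, the forward direction by tensoring the free presentation $0\to K\to S^t\to N\to 0$ with $M$ and using right-exactness to place $\sum_j \textbf{\emph{m}}_j\otimes \textbf{\emph{e}}_j$ in the image of $M\otimes_S K$. One small imprecision in your final paragraph: there are no ``$S$-scalars'' to absorb, since $M\otimes_S K$ (right module tensored with left module over a noncommutative ring) is only an abelian group and every element is already a finite sum of elementary tensors; moreover $K$, being only a left submodule of $S^t$, does not carry a right $S$-action, so the absorption you describe is not literally available---but it is also not needed, so the argument stands.
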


\begin{theorem}\label{tensor}
Let $M:=_A\langle \textbf{f}_1,\dots, \textbf{f}_s\rangle_A$ be a finitely generated centralizing
$A$-subbimodule of $A^m$, with $\textbf{f}_i\in Cen_A(M)$, $1\leq i\leq s$, and $N:=_A\langle
\textbf{g}_1,\dots,\textbf{g}_t\rangle$ be a finitely generated left $A$-submodule of $A^l$. Then,
\begin{equation*}
M\otimes N\cong A^{st}/Syz(M\otimes N),
\end{equation*}
where
\begin{equation*}
Syz(M\otimes N)=_A\langle [Syz(M)\otimes I_t\mid I_s\otimes Syz(N)]\rangle.
\end{equation*}
\end{theorem}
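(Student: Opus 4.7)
My plan is to introduce the canonical surjection $\pi\colon A^{st}\to M\otimes N$ sending $\textbf{\emph{e}}_{ij}\mapsto \textbf{\emph{f}}_i\otimes \textbf{\emph{g}}_j$ and identify $\ker(\pi)$ with the left $A$-module generated by the columns of the block matrix $[Syz(M)\otimes I_t\mid I_s\otimes Syz(N)]$. Surjectivity of $\pi$ is checked by noting that any pure tensor $\textbf{\emph{m}}\otimes \textbf{\emph{n}}$ is a left $A$-combination of the $\textbf{\emph{f}}_i\otimes \textbf{\emph{g}}_j$'s: write $\textbf{\emph{m}}=\sum_i \textbf{\emph{f}}_i a_i$ (possible because the centralizing $\textbf{\emph{f}}_i$ generate $M$ on the right as well as on the left), push the $a_i$ across the tensor, expand $a_i\textbf{\emph{n}}$ in the $\textbf{\emph{g}}_j$'s, then use centrality once more to move the resulting coefficients to the left.

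The inclusion ${}_A\langle [Syz(M)\otimes I_t\mid I_s\otimes Syz(N)]\rangle\subseteq \ker(\pi)$ is a direct computation. A generator $\textbf{\emph{z}}\otimes \textbf{\emph{e}}_j$ of $Syz(M)\otimes I_t$, with $\textbf{\emph{z}}=(z_1,\ldots,z_s)^T\in Syz(M)$, maps to $\bigl(\sum_i z_i\textbf{\emph{f}}_i\bigr)\otimes \textbf{\emph{g}}_j=0$ directly from the defining property of $Syz(M)$. A generator $\textbf{\emph{e}}_i\otimes \textbf{\emph{w}}$ of $I_s\otimes Syz(N)$, with $\textbf{\emph{w}}=(w_1,\ldots,w_t)^T\in Syz(N)$, maps to $\sum_j w_j(\textbf{\emph{f}}_i\otimes \textbf{\emph{g}}_j)=\textbf{\emph{f}}_i\otimes \bigl(\sum_j w_j\textbf{\emph{g}}_j\bigr)=0$, where centrality of $\textbf{\emph{f}}_i$ is exactly what allows each $w_j$ to cross the tensor sign.

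For the reverse inclusion, take $\textbf{\emph{c}}=\sum_{ij}c_{ij}\textbf{\emph{e}}_{ij}\in\ker(\pi)$. Using $c_{ij}\textbf{\emph{f}}_i=\textbf{\emph{f}}_i c_{ij}$, the relation $0=\sum_{ij}c_{ij}(\textbf{\emph{f}}_i\otimes \textbf{\emph{g}}_j)$ becomes $\sum_j \textbf{\emph{m}}_j\otimes \textbf{\emph{g}}_j=0$ with $\textbf{\emph{m}}_j:=\sum_i \textbf{\emph{f}}_i c_{ij}\in M$. Proposition \ref{tensorpreliminary} then supplies elements $\textbf{\emph{m}}'_v\in M$ and a matrix $H=[h_{jv}]\in M_{t\times r}(A)$ whose columns lie in $Syz(N)$, such that $\textbf{\emph{m}}_j=\sum_v \textbf{\emph{m}}'_v h_{jv}$ for every $j$. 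Writing each $\textbf{\emph{m}}'_v=\sum_i \textbf{\emph{f}}_i a_{iv}$ and substituting gives $\sum_i \textbf{\emph{f}}_i\bigl(c_{ij}-\sum_v a_{iv}h_{jv}\bigr)=0$, so the vector $\bigl(c_{ij}-\sum_v a_{iv}h_{jv}\bigr)_{1\le i\le s}$ is a right syzygy of $M$ and, by centrality, also a left syzygy; it is therefore a left $A$-combination of the generators of $Syz(M)$. Expanding the columns of $H$ in the generators of $Syz(N)$ on the other side then packages $\textbf{\emph{c}}$ exactly as a left $A$-combination of the columns of $Syz(M)\otimes I_t$ and $I_s\otimes Syz(N)$.

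The main obstacle I anticipate is purely bookkeeping: one must keep straight which side of the tensor each scalar is acting on, and invoke $a\textbf{\emph{f}}_i=\textbf{\emph{f}}_i a$ precisely when a coefficient must cross the $\otimes$ sign. Once this is organized, the Kronecker notation $Syz(M)\otimes I_t$ and $I_s\otimes Syz(N)$ merely catalogues the explicit generators $\textbf{\emph{z}}_k\otimes \textbf{\emph{e}}_j$ and $\textbf{\emph{e}}_i\otimes \textbf{\emph{w}}_l$ that emerge naturally from the calculation.
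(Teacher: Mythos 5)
Your proposal is correct and follows essentially the same route as the paper's proof: the same surjection $A^{st}\to M\otimes N$, the same direct verification that the columns of $Syz(M)\otimes I_t$ and $I_s\otimes Syz(N)$ are syzygies (with centrality used exactly where you use it), and the same appeal to Proposition \ref{tensorpreliminary} to decompose an arbitrary element of the kernel. Your only departure is presentational: you keep the coefficients on the right of the $\textbf{\emph{f}}_i$ throughout and convert to left syzygies via centrality at the end, which is a slightly cleaner bookkeeping of the same argument.
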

\begin{proof}
Since every $\textbf{\emph{f}}_i\in Cen_A(M)$, it is clear that
\begin{equation*}
M\otimes N=_A\langle \textbf{\emph{f}}_1\otimes \textbf{\emph{g}}_1,\dots
,\textbf{\emph{f}}_1\otimes \textbf{\emph{g}}_t,\dots ,\textbf{\emph{f}}_s\otimes
\textbf{\emph{g}}_1,\dots ,\textbf{\emph{f}}_s\otimes \textbf{\emph{g}}_t\rangle.
\end{equation*}
Let $Syz(M):=\langle \textbf{\emph{f}}_1',\dots , \textbf{\emph{f}}_r'\rangle$ be the left
$A$-module of syzygies of the left $A$-module $M$, and $Syz(N):=\langle \textbf{\emph{g}}_1',\dots
, \textbf{\emph{g}}_p'\rangle$ be the left $A$-module of syzygies of the left $A$-module $N$, with
\begin{equation*}
\textbf{\emph{f}}_1':=(f_{11},\dots ,f_{s1})^T,\dots , \textbf{\emph{f}}_r':=(f_{1r},\dots
,f_{sr})^T
\end{equation*}
and
\begin{equation*}
\textbf{\emph{g}}_1':=(g_{11},\dots ,g_{t1})^T,\dots , \textbf{\emph{g}}_p':=(g_{1p},\dots
,g_{tp})^T.
\end{equation*}
In a matrix notation,
\begin{center}
$Syz(M)=
\begin{bmatrix}
f_{11} & \dots & f_{1r} \\
\vdots & \dots & \vdots \\
f_{s1} & \dots & f_{sr}
\end{bmatrix},\,\, Syz(N)=
\begin{bmatrix}
g_{11} & \dots & g_{1p} \\
\vdots & \dots & \vdots \\
g_{t1} & \dots & g_{tp}
\end{bmatrix}$.
\end{center}

Then,
\begin{center}
$
\begin{matrix}
f_{11}\textbf{\emph{f}}_1+\cdots +f_{s1}\textbf{\emph{f}}_s & = &\textbf{0}\\
\vdots & & \\
f_{1r}\textbf{\emph{f}}_1+\cdots +f_{sr}\textbf{\emph{f}}_s & = &\textbf{0}
\end{matrix}
$
\end{center}
and
\begin{center}
$
\begin{matrix}
g_{11}\textbf{\emph{g}}_1+\cdots +g_{t1}\textbf{\emph{g}}_t & = &\textbf{0}\\
\vdots & & \\
g_{1p}\textbf{\emph{g}}_1+\cdots +g_{tp}\textbf{\emph{g}}_t & = &\textbf{0}.
\end{matrix}
$
\end{center}
We note that any of the following $tr$ vectors has $st$ entries and belongs to $Syz(M\otimes N)$
\begin{center}
$
\begin{matrix}
(f_{11},0,\dots ,0,\dots ,f_{s1},0,\dots ,0)^T \\
\vdots \\
(0,0,\dots ,f_{11},\dots ,0,0,\dots ,f_{s1})^T\\
\vdots \\
(f_{1r},0,\dots ,0,\dots ,f_{sr},0,\dots ,0)^T \\
\vdots \\
(0,0,\dots ,f_{1r},\dots ,0,0,\dots ,f_{sr})^T.
\end{matrix}
$
\end{center}
Since for every $1\leq i\leq s$, $\textbf{\emph{f}}_i\in Cen_A(M)$, any of the following $ps$
vectors has $st$ entries and belongs to $Syz(M\otimes N)$
\begin{center}
$
\begin{matrix}
(g_{11},\dots ,g_{t1},\dots ,0,\dots ,0)^T \\
\vdots \\
(0,\dots ,0,\dots ,g_{11},\dots ,g_{t1})^T \\
\vdots \\
(g_{1p},\dots ,g_{tp},\dots ,0,\dots ,0)^T \\
\vdots \\
(0,\dots ,0,\dots ,g_{1p},\dots ,g_{tp})^T.
\end{matrix}
$
\end{center}
We can dispose these $tr+ps$ vectors by columns in a matrix $[C\mid B]$ of size $st\times (tr+ps)$,
where
\begin{center}
$C:=
\begin{bmatrix}
f_{11} & \dots & 0 & \dots & f_{1r} & \dots & 0 \\
 & \ddots &  & \dots &  & \ddots &  \\
0 & \dots & f_{11} & \dots & 0 & \dots & f_{1r} \\
 & \vdots &  & \dots &  & \vdots &  \\
f_{s1} & \dots & 0 & \dots & f_{sr} & \dots & 0 \\
 & \ddots &  & \dots &  & \ddots &  \\
0 & \dots & f_{s1} & \dots & 0 & \dots & f_{sr}
\end{bmatrix}=Syz(M)\otimes I_t
$
\end{center}
\begin{center}
$B:=
\begin{bmatrix}
g_{11} & \dots & 0 & \dots & g_{1p} & \dots & 0\\
\vdots & \vdots & \vdots & \dots & \vdots & \vdots & \vdots \\
g_{t1} & \dots & 0 & \dots & g_{tp} & \dots & 0\\
\vdots  & \ddots & \vdots & \dots & \vdots & \ddots & \vdots\\
0 & \dots & g_{11} & \dots & 0 & \dots & g_{1p}\\
\vdots & \vdots & \vdots & \dots & \vdots & \vdots & \vdots \\
0 & \dots & g_{t1} & \dots & 0 & \dots & g_{tp}
\end{bmatrix}.
$
\end{center}
But $B$ can be changed by
\begin{center}
$
\begin{bmatrix}
g_{11} & \dots & g_{1p} & \dots & 0 & \dots & 0\\
\vdots & \vdots & \vdots & \dots & \vdots & \vdots & \vdots \\
g_{t1} & \dots & g_{tp} & \dots & 0 & \dots & 0\\
\vdots  & \vdots & \vdots & \ddots & \vdots & \vdots & \vdots\\
0 & \dots & 0 & \dots & g_{11} & \dots & g_{1p}\\
\vdots & \vdots & \vdots & \dots & \vdots & \vdots & \vdots \\
0 & \dots & 0 & \dots & g_{t1} & \dots & g_{tp}
\end{bmatrix}=I_s\otimes Syz(N).
$
\end{center}
Thus, we have proved that $_A\langle [Syz(M)\otimes I_t\mid I_s\otimes Syz(N)]\rangle\subseteq
Syz(M\otimes N)$.

Now we assume that $\textbf{\emph{h}}:=(h_{11},\dots ,h_{1t}, \dots , h_{s1},\dots ,h_{st})^T\in
Syz(M\otimes N)$, then
\begin{center}
$h_{11}(\textbf{\emph{f}}_1\otimes \textbf{\emph{g}}_1)+\cdots +h_{1t}(\textbf{\emph{f}}_1\otimes
\textbf{\emph{g}}_t) +\cdots + h_{s1}(\textbf{\emph{f}}_s\otimes \textbf{\emph{g}}_1)+\cdots
+h_{st}(\textbf{\emph{f}}_s\otimes \textbf{\emph{g}}_t)=\textbf{0}$.
\end{center}
From this we get that
\begin{center}
$\textbf{\emph{m}}_1\otimes \textbf{\emph{g}}_1+\cdots +\textbf{\emph{m}}_t\otimes
\textbf{\emph{g}}_t=\textbf{0}$,
\end{center}
where
\begin{center}
$\textbf{\emph{m}}_j=h_{1j}\textbf{\emph{f}}_1+\cdots +h_{sj}\textbf{\emph{f}}_s\in M$,
\end{center}
with $1\leq j\leq t$. From Proposition \ref{tensorpreliminary}, there exist polynomials $a_{jv}\in
A$ and vectors $\textbf{\emph{m}}_v'\in M$ such that
$\textbf{\emph{m}}_j=\sum_{v=1}^{r}a_{jv}\textbf{\emph{m}}_v'$ and
$\sum_{j=1}^{t}a_{jv}\textbf{\emph{g}}_j=\textbf{0}$, for each $1\leq v\leq r$. This means that
$(a_{1v},\dots ,a_{tv})\in Syz(N)$ for each $1\leq v\leq r$. Since $\textbf{\emph{m}}_v'\in M$
there exist $q_{uv}\in A$ such that $\textbf{\emph{m}}_v'=q_{1v}\textbf{\emph{f}}_1+\cdots
+q_{sv}\textbf{\emph{f}}_s$, and then
\begin{center}
$
\begin{matrix}
\sum_{i=1}^sh_{i1}\textbf{\emph{f}}_i & = & a_{11}(q_{11}\textbf{\emph{f}}_1
+\cdots +q_{s1}\textbf{\emph{f}}_s)+\cdots +a_{1r}(q_{1r}\textbf{\emph{f}}_1+\cdots +q_{sr}\textbf{\emph{f}}_s)\\
 & \vdots & \\
\sum_{i=1}^sh_{it}\textbf{\emph{f}}_i & = & a_{t1}(q_{11}\textbf{\emph{f}}_1+\cdots
+q_{s1}\textbf{\emph{f}}_s)+\cdots +a_{tr}(q_{1r}\textbf{\emph{f}}_1+\cdots
+q_{sr}\textbf{\emph{f}}_s).
\end{matrix}
$
\end{center}
From this we get that
\begin{center}
$
\begin{matrix}
\sum_{i=1}^s(h_{i1}-(a_{11}q_{i1}+\cdots +a_{1r}q_{ir}))\textbf{\emph{f}}_i & = & \textbf{0}\\
\vdots  & & \\
\sum_{i=1}^s(h_{it}-(a_{t1}q_{i1}+\cdots +a_{tr}q_{ir}))\textbf{\emph{f}}_i & = & \textbf{0}
\end{matrix}
$
\end{center}
i.e.,
\begin{center}
$
\begin{matrix}
(h_{11}-(a_{11}q_{11}+\cdots +a_{1r}q_{1r})),\dots ,h_{s1}-(a_{11}q_{s1}+\cdots +a_{1r}q_{sr})) \in Syz(M)\\
\vdots  \\
(h_{1t}-(a_{t1}q_{11}+\cdots +a_{tr}q_{1r})),\dots ,h_{st}-(a_{t1}q_{s1}+\cdots +a_{tr}q_{sr}))\in
Syz(M).
\end{matrix}
$
\end{center}
This implies that
\begin{center}
$
\begin{matrix}
(h_{11},\dots ,h_{s1})-(a_{11}q_{11}+\cdots +a_{1r}q_{1r},\dots ,a_{11}q_{s1}+\cdots +a_{1r}q_{sr})\in Syz(M)\\
\vdots \\
(h_{1t},\dots ,h_{st})-(a_{t1}q_{11}+\cdots +a_{tr}q_{1r},\dots ,a_{t1}q_{s1}+\cdots
+a_{tr}q_{sr})\in Syz(M).
\end{matrix}
$
\end{center}
Then,
\begin{center}
$
\begin{matrix}
(h_{i1})_{i=1}^s=(a_{11}q_{11}+\cdots +a_{1r}q_{1r},\dots ,a_{11}q_{s1}+\cdots +a_{1r}q_{sr})+(f_{11},\dots ,f_{s1})\\
\vdots \\
(h_{it})_{i=1}^s =(a_{t1}q_{11}+\cdots +a_{tr}q_{1r},\dots ,a_{t1}q_{s1}+\cdots
+a_{tr}q_{sr})+(f_{1t},\dots ,f_{st}),
\end{matrix}
$
\end{center}
with $(f_{11},\dots ,f_{s1}),\dots ,(f_{1t},\dots ,f_{st})\in Syz(M)$. From this we get
\begin{center}
$
\begin{matrix}
h_{11}=f_{11}+a_{11}q_{11}+\cdots +a_{1r}q_{1r}\\
\vdots \\
h_{1t}=f_{1t}+a_{t1}q_{11}+\cdots +a_{tr}q_{1r}\\
\vdots \\
h_{s1}=f_{s1}+a_{11}q_{s1}+\cdots +a_{1r}q_{sr}\\
\vdots \\
h_{st}=f_{st}+a_{t1}q_{s1}+\cdots +a_{tr}q_{sr},
\end{matrix}
$
\end{center}
and hence $\textbf{\emph{h}}$ is a linear combination of the columns of the following matrix
\begin{center}
$[Syz(M)\otimes I_t\mid D]$,
\end{center}
where
\begin{center}
$D=
\begin{bmatrix}
a_{11} & \dots & a_{1r} & \dots & 0 & \dots & 0\\
\vdots & \vdots & \vdots & \dots & \vdots & \vdots & \vdots \\
a_{t1} & \dots & a_{tr} & \dots & 0 & \dots & 0\\
\vdots  & \vdots & \vdots & \ddots & \vdots & \vdots & \vdots\\
0 & \dots & 0 & \dots & a_{11} & \dots & a_{1r}\\
\vdots & \vdots & \vdots & \dots & \vdots & \vdots & \vdots \\
0 & \dots & 0 & \dots & a_{t1} & \dots & a_{tr}
\end{bmatrix}.
$
\end{center}
But $_A\langle D\rangle\subseteq \langle I_s\otimes Syz(N)\rangle$, and hence, $h\in _A\langle
[Syz(M)\otimes I_t\mid I_s\otimes Syz(N)]\rangle$. This completes the proof of the theorem.
\end{proof}

\subsection{Computation of $Tor$}

Using syzygies we present next an easy procedure for computing the left $A$-modules
$Tor^A_{r}(M,N)$ for $r\geq 0$, where $M:=_A\langle \textbf{\emph{f}}_1,\dots,
\textbf{\emph{f}}_s\rangle_A$ is a finitely generated centralizing $A$-subbimodule of $A^m$, i.e.,
$\textbf{\emph{f}}_i\in Cen_A(M)$, $1\leq i\leq s$, and $N:=_A\langle
\textbf{\emph{g}}_1,\dots,\textbf{\emph{g}}_t\rangle$ is a finitely generated left $A$-submodule of
$A^l$. By computing we mean to find a presentation and a system of generators of $Tor^A_{r}(M,N)$.
Our computations of course extended the well known results on commutative polynomial rings, see for
example \cite{Pfister}, Proposition 7.1.3. For $r=0$, the computation was given in Theorem
\ref{tensor}. So we assume that $r\geq 1$.

\textit{Presentation of} $Tor^A_{r}(M,N)$, $r\geq 1$:

\textit{Step 1}. We first compute left presentations of $M$ and $N$,
\begin{center}
$M\cong A^s/Syz(M),N\cong A^t/Syz(N)$;
\end{center}
remember that $Syz(M)$ and $Syz(N)$ are the kernels of the natural left $A$-\-ho\-mo\-mor\-phisms
\begin{center}
$\pi_M:A^s\longrightarrow M$ and $\pi_N:A^t\longrightarrow N$
\end{center}
defined by $\pi_M(\textbf{\emph{e}}_i):=\textbf{\emph{f}}_i$,
$\pi_N(\textbf{\emph{e}}^{\prime}_j):=\textbf{\emph{g}}_j$, $1\leq i\leq s$, $1\leq j \leq t$,
where $\{\textbf{\emph{e}}_i\}_{1\leq i\leq s}$ is the canonical basis of $A^m$ and
$\{\textbf{\emph{e}}_j'\}_{1\leq j \leq t}$ is the canonical basis of $A^t$ (see Section
\ref{sec12.5}). However, since every generator $\textbf{\textit{f}}_i$ of $M$ is in $Cen_A(M)$,
then $\pi_M$ is a $A$-bimodule homomorphism and hence $Syz(M)$ is a subbimodule of $A^s$. Thus,
$A^s/Syz(M)$ is an $A$-bimodule.

\textit{Step 2}. We compute a free resolution of the left $A$-module $A^t/Syz(N)$ {\small
\begin{center}
$\cdots \xrightarrow{g_{r+2}} A^{t_{r+1}}\xrightarrow{g_{r+1}} A^{t_{r}}\xrightarrow{g_r}
A^{t_{r-1}}\xrightarrow{g_{r-1}} \cdots \xrightarrow{g_2} A^{t_1}\xrightarrow{g_1}
A^{t_0}\xrightarrow{g_0} A^t/Syz(N)\longrightarrow 0$.
\end{center}}

\textit{Step 3}. We consider the complex
\begin{center}
$\cdots \xrightarrow{i\otimes g_{r+2}} A^s/Syz(M)\otimes A^{t_{r+1}}\xrightarrow{i\otimes g_{r+1}}
A^s/Syz(M)\otimes A^{t_{r}}\xrightarrow{i\otimes g_{r}} \cdots \xrightarrow{i\otimes g_2}
A^s/Syz(M)\otimes A^{t_1}\xrightarrow{i\otimes g_1} A^s/Syz(M)\otimes A^{t_0}\longrightarrow 0$,
\end{center}
where $i$ is the identical homomorphism of $A^s/Syz(M)$, and then we have the isomorphism of left
$A$-modules
\begin{center}
$Tor^{A}_r(M,N)\cong Tor^{A}_r(A^s/Syz(M),A^t/Syz(N))=\ker(i\otimes g_{r})/Im(i\otimes g_{r+1})$.
\end{center}
If $G_r$ is the matrix of $g_r$ in the canonical bases, then $\ker(i\otimes g_{r})=Syz(I_s\otimes
G_{r})$ and we get
\begin{equation}\label{tor1}
Tor^{A}_r(M,N)\cong Syz(I_s\otimes G_{r})/_A\langle I_s\otimes G_{r+1}\rangle.
\end{equation}

\textit{Step 3}. Let $q_r$ be the number of generators of $Syz(I_s\otimes G_{r})$, then by
Proposition \ref{quotient}, a presentation of $Tor^{A}_r(M,N)$ is given by
\begin{equation}\label{tor2}
Tor^{A}_r(M,N)\cong A^{q_r}/Syz(Syz(I_s\otimes G_{r})/_A\langle I_s\otimes G_{r+1}\rangle),
\end{equation}
where a set of generators of $Syz(Syz(I_s\otimes G_{r})/_A\langle I_s\otimes G_{r+1}\rangle)$ are
the first $q_r$ coordinates of generators of
\begin{equation*}\label{tor3}
Syz[Syz[I_s\otimes G_{r}]|I_s\otimes G_{r+1}].
\end{equation*}
\textit{System of generators of} $Tor^A_{r}(M,N)$, $r\geq 1$: By (\ref{tor1}), a system of
generators of $Tor^{A}_r(M,N)$ is given by a system of generators of $Syz(I_s\otimes G_{r})$. Thus,
if
\begin{center}
$Syz[I_s\otimes G_{r}]:=[\textbf{\emph{h}}_1 \cdots \textbf{\emph{h}}_{q_r}]$,
\end{center}
then
\begin{center}
$Tor^{A}_r(M,N)=_A\langle \widetilde{\textbf{\emph{h}}_1}, \dots ,
\widetilde{\textbf{\emph{h}}_{q_r}}\rangle$,
\end{center}
where $\widetilde{\textbf{\emph{h}}_v}:=\textbf{\emph{h}}_v+_A\langle I_s\otimes G_{r+1}\rangle$,
$1\leq v\leq q_r$.

\subsection{Computation of $Hom$}

\noindent In this section we will compute the right $A$-module $Hom_A(M,N)$, where \linebreak
$M:=_A\langle \textbf{\emph{f}}_1,\dots, \textbf{\emph{f}}_s\rangle$ is a finitely generated left
$A$-submodule of $A^m$ and \linebreak $N:=_A\langle
\textbf{\emph{g}}_1,\dots,\textbf{\emph{g}}_t\rangle_A$ is a finitely generated centralizing
$A$-subbimodule of $A^l$, i.e., $\textbf{\emph{g}}_i\in Cen_A(N)$, $1\leq i\leq s$. By computing
$Hom_A(M,N)$ we mean to find a presentation of $Hom_A(M,N)$ and find an specific set of generators
for $Hom_A(M,N)$. We divide the procedure in some steps.

\textit{Step 1. Presentations of $M$ and $N$.} In order to compute a presentation of $Hom_A(M,N)$
we first compute presentations of $M$ and $N$. Thus, we have
\begin{equation*}
M\cong A^s/Syz(M), N\cong A^t/Syz(N),
\end{equation*}
where $Syz(M)$ and $Syz(N)$ are the kernels of the left $A$-homomorphisms
\begin{center}
$\pi_M:A^s\longrightarrow M$ and $\pi_N:A^t\longrightarrow N$
\end{center}
defined by $\pi_M(\textbf{\emph{e}}_i):=\textbf{\emph{f}}_i$,
$\pi_N(\textbf{\emph{e}}^{\prime}_j):=\textbf{\emph{g}}_j$, $1\leq i\leq s$, $1\leq j \leq t$,
where $\{\textbf{\emph{e}}_i\}_{1\leq i\leq s}$ is the canonical basis of $A^s$ and
$\{\textbf{\emph{e}}_j'\}_{1\leq j \leq t}$ is the canonical basis of $A^t$ (see Section
\ref{sec12.5}). Since every generator $\textbf{\textit{g}}_j$ of $N$ is in $Cen_A(N)$, then $\pi_N$
is a bimodule homomorphism, $Syz(N)$ is a subbimodule of $A^t$ and hence $A^t/Syz(N)$ is an
$A$-bimodule.

Thus,
\begin{center}
$Hom_A(M,N)\cong Hom_A(A^s/Syz(M),A^t/Syz(N))$,
\end{center}
and we can compute a presentation of $Hom_A(A^s/Syz(M),A^t/Syz(N))$ instead of $Hom_A(M,N)$.

According to Section \ref{sec12.5}, $Syz(M)$ and $Syz(N)$ are computed by the syzygies of the
matrices
\begin{equation*}
F_M=
\begin{bmatrix} \textbf{\emph{f}}_1 & \cdots & \textbf{\emph{f}}_s
\end{bmatrix}, F_N=
\begin{bmatrix} \textbf{\emph{g}}_1 & \cdots & \textbf{\emph{g}}_t
\end{bmatrix},
\end{equation*}
i.e., $Syz(M)=Syz(F_M),Syz(N)=Syz(F_N)$.

\textit{Step 2. $Hom_A(A^s/Syz(M),A^t/Syz(N))$ as a kernel.} Recall that $A^s$ and $A^l$ are left
noetherian $A$-modules, so $Syz(M)$ is generated by a finite set of $s_1$ elements and $Syz(N)$ is
generated by $t_1$ elements. Therefore, we have surjective homomorphisms of left $A$-modules,
$\pi^{\prime}_M:A^{s_1}\longrightarrow Syz(M)$ and $\pi^{\prime}_N:A^{t_1}\longrightarrow Syz(N)$,
hence the following sequences of left $A$-modules are exact
\begin{equation}\label{eq1}
\begin{CD}
A^{s_1} @>{\delta_M}>> A^s @>{j_M}>> A^s/Syz(M) @>>> 0
\end{CD}
\end{equation}
\begin{equation*}\label{eq2}
\begin{CD}
A^{t_1} @>{\delta_N}>> A^t @>{j_N}>> A^t/Syz(N) @>>> 0
\end{CD}
\end{equation*}
where $\delta_M:=l_M\circ \pi^{\prime}_M$, $\delta_N:=l_N\circ \pi^{\prime}_N$, $l_M,l_N$ denote
inclusions, and $j_M,j_N$ are the canonical $A$-homomorphisms. Since $A^t/Syz(N)$ is an
$A$-bimodule, from (\ref{eq1}) we get the exact sequence of right $A$-modules
\begin{center}
$ 0\rightarrow Hom_A(A^s/Syz(M),A^t/Syz(N)) \xrightarrow{p} Hom_A(A^s,A^t/Syz(N)) \xrightarrow{d}
Hom_A(A^{s_1},A^t/Syz(N)), $
\end{center}
where
\begin{center}
$d(\alpha):=\alpha\circ \delta_M$, \,\, \text{for} \,\,$\alpha \in Hom_A(A^s,A^t/Syz(N))$;
\end{center}
$p$ is defined in a similar way. Since $Im(p)=\ker(d)$, we have the following isomorphism of right
$A$-modules:
\begin{equation}\label{eq3}
Hom_A(A^s/Syz(M),A^t/Syz(N))\cong \ker(d).
\end{equation}

\textit{Step 3. Computing finite presentations of $Hom_A(A^s,A^t/Syz(N))$ and
$Hom_A(A^{s_{1}},A^t/Syz(N))$}. By (\ref{eq3}), we must compute presentations of the right
$A$-modules $Hom_A(A^s,A^t/Syz(N))$ and $Hom_A(A^{s_1},A^t/Syz(N))$. We will show the details for
the first case, the second one is similar. We have the isomorphism of right $A$-modules (actually,
of $A$-bimodules):
\begin{center}
$Hom_A(A^s,A^t/Syz(N))\cong [A^t/Syz(N)]^s$

$f\in Hom_A(A^s,A^t/Syz(N))\mapsto (f(\textbf{\emph{e}}_1),\dots,f(\textbf{\emph{e}}_s))^T$.
\end{center}
Let $\Delta_N\in M_{t\times t_1}(A)$ be the matrix of $\delta_N$ in the canonical bases of
$A^{t_1}$ and $A^t$; we have the following isomorphism of right $A$-modules:
\begin{center}
$[A^t/Syz(N)]^s\cong A^{ts}/\langle I_s\otimes \Delta_N\rangle_A$

\bigskip

$(\overline{(a_{11},\dots,a_{t1})},\dots,(\overline{(a_{1s},\dots,a_{ts})})^T\mapsto
\overline{(a_{11},\dots,a_{t1},\dots,a_{1s},\dots,a_{ts})^T}$.
\end{center}
Hence, a presentation of $Hom_A(A^{s},A^t/Syz(N))$ is
\begin{equation}\label{315}
Hom_A(A^s,A^t/Syz(N))\cong A^{ts}/\langle I_s\otimes \Delta_N\rangle_A,
\end{equation}
where the isomorphism is defined in the following way: suppose that $f\in Hom_A(A^{s},A^t/Syz(N))$
and $f(\textbf{\emph{e}}_i):=\overline{(a_{1i},\dots, a_{ti})^T}$, $1\leq i\leq s$, then
\begin{center}
$Hom_A(A^s,A^t/Syz(N))\xrightarrow{\theta_{s,t}} A^{ts}/\langle I_s\otimes \Delta_N\rangle_A$

\bigskip

$f\mapsto \overline{(a_{11},\dots, a_{t1},\dots ,a_{1s},\dots, a_{ts})^T}$.
\end{center}
In the same way we have the isomorphism of right $A$-modules
\begin{equation}\label{315b}
Hom_A(A^{s_1},A^t/Syz(N))\xrightarrow{\theta_{s_1,t}} A^{ts_1}/\langle I_{s_1}\otimes
\Delta_N\rangle_A.
\end{equation}

\textit{Step 4. Presentation of $Hom_A(A^s/Syz(M),A^t/Syz(N))$}. From (\ref{315}) and (\ref{315b})
we define the homomorphism $\overline{d}$ of right $A$-modules by the following commutative diagram
\begin{equation*}
\begin{CD}
Hom_A(A^s,A^t/Syz(N)) @>{d}>> Hom_A(A^{s_1},A^t/Syz(N))\\
@V{\theta_{s,t}}VV @VV{\theta_{s_1,t}}V \\
A^{ts}/\langle I_s\otimes \Delta_N\rangle_A @>\overline{d}>> A^{ts_1}/\langle I_{s_1}\otimes
\Delta_N\rangle_A
\end{CD}
\end{equation*}
i.e., $\overline{d}:=\theta_{s_1,t}\circ d\circ\theta_{s,t}^{-1}$. Hence, $\ker(d)\cong
\ker(\overline{d})$, and from (\ref{eq3}), a presentation of $\ker(\overline{d})$ gives a
presentation of $Hom_A(A^s/Syz(M),A^t/Syz(N))$. We can give the explicit definition of
$\overline{d}$: if
$\overline{\textbf{\emph{a}}}:=\overline{(a_{11},\dots,a_{t1},\dots,a_{1s},\dots,a_{ts})^T}\in
A^{ts}/\langle I_s\otimes \Delta_N\rangle_A$, then{\footnotesize
\begin{center}
$\overline{d}(\overline{\textbf{\emph{a}}})=\overline{(\sum_{k=1}^{s}\delta_{k1}a_{1k},\dots,\sum_{k=1}^{s}\delta_{k1}a_{tk},
\dots,
\sum_{k=1}^{s}\delta_{ks_1}a_{1k},\dots,\sum_{k=1}^{s}\delta_{ks_1}a_{tk})}=\overline{(\Delta_M^T\otimes
I_t)\textbf{\emph{a}}}$,
\end{center}}
\noindent where $\Delta_M:=[\delta_{ij}]\in M_{s\times s_1}(A)$ is the matrix of $\delta_M$ in the
canonical bases. In order to apply the right version of results of the previous chapter, we have to
interpret $A$ as a right bijective skew $PBW$ extension of the $GS$ ring $R$ and rewrite the
entries of matrices over $A$ with the coefficients in the right side. With this, note that{\tiny
\begin{equation*}
\overline{\textbf{\emph{a}}}\in \ker(\overline{d}) \Longleftrightarrow
\overline{d}(\overline{\textbf{\emph{a}}}) = \overline{\textbf{0}} \Longleftrightarrow
(\Delta_M^T\otimes I_t)\textbf{\emph{a}}\in \langle I_{s_1}\otimes
\Delta_N\rangle_A\Longleftrightarrow
\end{equation*}
\begin{equation*}
\text{\textit{the coordinates of $\textbf{a}$ are the first} $st$ \textit{entries of some element
of} $\textrm{Syz}^r\,([(\Delta_M\otimes I_t)^{T}|I_{s_1}\otimes \Delta_N)])$}
\end{equation*}
\begin{equation*}
\Longleftrightarrow \overline{\textbf{\emph{a}}}\in \langle U\rangle_A/\langle I_s\otimes
\Delta_N\rangle_A,
\end{equation*}}
where $\langle U\rangle_A$ is the right $A$-module generated by the columns of the matrix $U$
defined by
\begin{align*}
\text{\textit{columns of} $U$}:=&\text{\textit{first} $st$}\,\,  \text{\textit{coordinates of generators of}}\\
&Syz^r\,([(\Delta_M\otimes I_t)^{T}|I_{s_1}\otimes \Delta_N)]).
\end{align*}
Thus, we have proved that $\ker(\overline{d})=\langle U\rangle_A/\langle I_s\otimes
\Delta_N\rangle_A$, and we get the following theorem.

\begin{theorem}\label{homtheorem}
With the notation above,
\begin{equation}\label{hom}
Hom_A(M,N)\cong \langle U\rangle_A/\langle I_s\otimes \Delta_N\rangle_A,
\end{equation}
and a presentation of $\langle U\rangle_A/\langle I_s\otimes \Delta_N\rangle_A$ is a presentation
for $Hom_A(M,N)$.
\end{theorem}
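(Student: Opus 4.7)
The plan is to follow the four-step construction laid out immediately before the theorem statement and verify that each reduction gives a genuine isomorphism of right $A$-modules. First I would invoke the presentations $M \cong A^s/Syz(M)$ and $N \cong A^t/Syz(N)$ from Section \ref{sec12.5}. The crucial observation, already noted in Step 1, is that because every generator $\textbf{\emph{g}}_j$ of $N$ lies in $Cen_A(N)$, the homomorphism $\pi_N$ is actually a bimodule map, so $A^t/Syz(N)$ inherits an $A$-bimodule structure. This is what supplies the right $A$-module structure on $Hom_A(M,A^t/Syz(N))$ and makes the subsequent Hom computations meaningful as right modules.

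Second, I would apply $Hom_A(-,A^t/Syz(N))$ to the finite presentation $A^{s_1}\xrightarrow{\delta_M}A^s\xrightarrow{j_M}A^s/Syz(M)\to 0$ and use left exactness of Hom to identify $Hom_A(M,N)$ with $\ker(d)$, where $d$ is post-composition with $\delta_M$. Third, I would replace each $Hom_A(A^k,A^t/Syz(N))$ by $[A^t/Syz(N)]^k$ by evaluation on the canonical basis, and then by $A^{tk}/\langle I_k\otimes \Delta_N\rangle_A$ via the isomorphisms $\theta_{s,t}$ and $\theta_{s_1,t}$. A direct unwinding on basis elements $\textbf{\emph{e}}_i$ shows that under these isomorphisms $d$ corresponds to $\overline{d}$, which on a representative $\textbf{\emph{a}}\in A^{ts}$ is given explicitly by left multiplication by $(\Delta_M^T\otimes I_t)$ modulo $\langle I_{s_1}\otimes \Delta_N\rangle_A$.

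Fourth, and this is where the computation really happens, I would characterize $\overline{\textbf{\emph{a}}}\in \ker(\overline{d})$ as the condition that $(\Delta_M^T\otimes I_t)\textbf{\emph{a}}$ lies in $\langle I_{s_1}\otimes \Delta_N\rangle_A$, which is exactly the condition that $\textbf{\emph{a}}$ consists of the first $st$ coordinates of some element of the right syzygy module $Syz^r([(\Delta_M\otimes I_t)^T\mid I_{s_1}\otimes \Delta_N])$. By the definition of $U$, this yields $\ker(\overline{d})=\langle U\rangle_A/\langle I_s\otimes \Delta_N\rangle_A$, and combining with $\ker(d)\cong \ker(\overline{d})$ and (\ref{eq3}) gives the claimed presentation. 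The ``presentation'' half of the statement then follows by applying Proposition \ref{quotient} (in its right-module version) to the pair $\langle I_s\otimes \Delta_N\rangle_A\subseteq \langle U\rangle_A$.

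The main obstacle is the bookkeeping involved in switching between the left and right module structures. The input data, namely syzygies of $M$ and $N$, are defined on the left, but $Hom_A(M,N)$ is naturally a right module once $N$ is a bimodule, which forces us to compute $\ker(\overline{d})$ as a right submodule of $A^{ts}/\langle I_s\otimes \Delta_N\rangle_A$. Rewriting the entries of $\Delta_M$ and $\Delta_N$ with coefficients on the right and applying $Syz^r$ instead of $Syz$ is precisely what Remark \ref{rem11.5.7}(iii) authorizes via the bijectivity hypothesis; the careful identification of the containment $(\Delta_M^T\otimes I_t)\textbf{\emph{a}}\in \langle I_{s_1}\otimes \Delta_N\rangle_A$ with membership of an extended vector in a right syzygy module is the subtle point that makes the whole reduction effective and computable.
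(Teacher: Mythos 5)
Your proposal is correct and follows essentially the same route as the paper: the four-step reduction (presentations of $M$ and $N$, left exactness of $Hom_A(-,A^t/Syz(N))$ applied to the presentation of $A^s/Syz(M)$, the $\theta_{s,t}$ isomorphisms identifying $d$ with $\overline{d}$ given by $\Delta_M^T\otimes I_t$, and the characterization of $\ker(\overline{d})$ via $Syz^r([(\Delta_M\otimes I_t)^T\mid I_{s_1}\otimes \Delta_N])$) is exactly the argument the paper gives in the steps preceding the theorem statement. Your handling of the left/right module bookkeeping via Remark \ref{rem11.5.7}(iii) matches the paper's as well.
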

Observe that a system of generators of $\langle U\rangle_A$ gives a system of generators for
$Hom_A(M,N)$.

\subsection{Computation of $Ext$}\label{10.7} Using syzygies we now describe an easy procedure for
computing the right $A$-modules $Ext^r_{A}(M,N)$, for $r\geq 0$, where $M:=_A\langle
\textbf{\emph{f}}_1,\dots, \textbf{\emph{f}}_s\rangle$ is a finitely generated left $A$-submodule
of $A^m$ and $N:=_A\langle \textbf{\emph{g}}_1,\dots,\textbf{\emph{g}}_t\rangle_A$ is a finitely
generated centralizing $A$-subbimodule of $A^l$, i.e., $\textbf{\emph{g}}_i\in Cen_A(N)$, $1\leq
i\leq s$. By computing $Ext^r_{A}(M,N)$ we mean to find a presentation of $Ext^r_{A}(M,N)$ and a
system of generators.

For $r=0$, $Ext^0_{A}(M,N)=Hom_A(M,N)$ and the computation was given in the previous section. So we
assume that $r\geq 1$.

\textit{Presentation of} $Ext^r_{A}(M,N)$, $r\geq 1$:

\textit{Step 1}. As in the previous section, we compute presentations of $M$ and $N$,
\begin{center}
$M\cong A^s/Syz(M),N\cong A^t/Syz(N)$.
\end{center}

\textit{Step 2}. We compute a free resolution of the left $A$-module $A^s/Syz(M)$,{\footnotesize
\begin{center}
$\cdots \xrightarrow{f_{r+2}} A^{s_{r+1}}\xrightarrow{f_{r+1}} A^{s_{r}}\xrightarrow{f_r}
A^{s_{r-1}}\xrightarrow{f_{r-1}} \cdots \xrightarrow{f_2} A^{s_1}\xrightarrow{f_1}
A^{s_0}\xrightarrow{f_0} A^s/Syz(M)\longrightarrow 0$.
\end{center}}
For every $r\geq 1$, let $F_r$ be the matrix of $f_r$ in the canonical bases.

\textit{Step 3}. As we observed in the previous section, $A^t/Syz(N)$ is an $A$-bimodule, so we get
the complex of right $A$-modules
\begin{center}
$0 \longrightarrow Hom_A(A^{s_0},A^t/Syz(N))\xrightarrow{f_{1}^*} \cdots \xrightarrow{f_{r}^*}
Hom_A(A^{s_r},A^t/Syz(N))\xrightarrow{f_{r+1}^*}
Hom_A(A^{s_{r+1}},A^t/Syz(N))\xrightarrow{f_{r+2}^{*}} \cdots$
\end{center}
According to Remark \ref{12.7.7}, we can interpret $A$ as a right bijective skew $PBW$ extension of
$R$, and hence, we can apply any of results of the previous section but in the right version.
Recall that{\small
\begin{equation}\label{ext1}
Ext^r_{A}(M,N)\cong Ext^r_{A}(A^s/Syz(M),A^t/Syz(N))=\ker(f_{r+1}^*)/Im(f_{r}^*).
\end{equation}}
By (\ref{315}), for each $r\geq 1$, a presentation of $Hom_A(A^{s_r},A^t/Syz(N))$ is given by the
following isomorphism of right $A$-modules
\begin{center}
$Hom_A(A^{s_r},A^t/Syz(N))\cong A^{ts_r}/\langle I_{s_r}\otimes Syz(N)\rangle_A$.
\end{center}
\textit{Step 4}. We can associate a matrix $F_r^*$ to the homomorphism $f_{r}^*$, as follows: we
have the following commutative diagram
\begin{equation*}
\begin{CD}
Hom_A(A^{s_r},A^t/Syz(N)) @>{f_{r+1}^*}>> Hom_A(A^{s_{r+1}},A^t/Syz(N)) \\
@VVV @VVV \\
A^{ts_r}/K_r @>{\overline{f_{r+1}^*}}>> A^{ts_{r+1}}/K_{r+1}
\end{CD}
\end{equation*}
where $K_r:=\langle I_{s_r}\otimes Syz(N)\rangle_A$ and the vertical arrows are isomorphisms of
right $A$-modules obtained concatenating the columns of matrices of \linebreak
$Hom_A(A^{s_r},A^t/Syz(N))$; moreover, the matrices of homomorphisms $f_{r+1}^*$ and
$\overline{f_{r+1}^*}$ coincides. So, we can replace the above complex for the following equivalent
complex
\begin{center}
$0 \longrightarrow A^{ts_0}/K_0\xrightarrow{f_1^*} \cdots \xrightarrow{f_{r}^*}
A^{ts_r}/K_r\xrightarrow{f_{r+1}^*} A^{ts_{r+1}}/K_{r+1}\xrightarrow{f_{r+2}^{*}} \cdots$.
\end{center}
We will compute the matrix $F_{r+1}^*$: let $\{\textbf{\emph{e}}_1,\dots
,\textbf{\emph{e}}_{ts_r}\}$ be the canonical basis of $A^{ts_r}$, then for each $1\leq i\leq
ts_r$, the element $\overline{\textbf{\emph{e}}_i}=\textbf{\emph{e}}_i+K_r$ can be replaced by its
corresponding canonical matrix $G_i$, and since $f_{r+1}^*(g)=gf_{r+1}$ for each $g\in
Hom_A(A^{s_r},A^t/Syz(N))$, then we conclude that
\begin{center}
$F_{r+1}^*=I_t\otimes F_{r+1}^T$.
\end{center}

\textit{Step 5}. By (\ref{ext1}), a presentation of $Ext^r_{A}(M,N)$ is given by a presentation of
$\ker(f_{r+1}^*)/Im(f_{r}^*)$. Hence, we can apply Proposition \ref{quotient}, let $p_r$ be the
number of generators of $\ker(f_{r+1}^*)=Syz^r(F_{r+1}^*)=Syz^r(I_t\otimes F_{r+1}^T)$, where the
entries of the matrix $F_{r+1}$ should be rewritten with coefficients in the right side; we know
how to compute this syzygy, and hence, we know how to compute $p_r$. We also know how to compute
the matrix $F_{r}^*=I_t\otimes F_{r}^T$. Then, a presentation of $Ext^r_{A}(M,N)$ is given by
\begin{equation}\label{ext2}
Ext^r_{A}(M,N)\cong A^{p_r}/Syz(\ker(f_{r+1}^*)/Im(f_{r}^*)),
\end{equation}
where a set of generators of $Syz(\ker(f_{r+1}^*)/Im(f_{r}^*))$ are the first $p_r$ coordinates of
the generators of
\begin{equation}\label{ext3}
Syz^r[Syz^r[I_t\otimes F_{r+1}^T]|I_t\otimes F_{r}^T].
\end{equation}

\textit{System of generators of} $Ext^r_{A}(M,N)$, $r\geq 1$: By (\ref{ext1}), a system of
generators for $Ext^r_{A}(M,N)$ is defined by a system of generators of
$\ker(f_{r+1}^*)=Syz^r(F_{r+1}^*)=Syz^r(I_t\otimes F_{r+1}^T)$, hence if
\begin{center}
$Syz^r[I_t\otimes F_{r+1}^T]:=[\textbf{\emph{h}}_1 \cdots \textbf{\emph{h}}_{p_r}]$,
\end{center}
then
\begin{center}
$Ext^r_{A}(M,N)=\langle \widetilde{\textbf{\emph{h}}_1},\dots
,\widetilde{\textbf{\emph{h}}_{p_r}}\rangle_A$,
\end{center}
where $\widetilde{\textbf{\emph{h}}_u}:=\textbf{\emph{h}}_u+Im(F_r^*)$, $1\leq u\leq p_r$.

\begin{remark}
(i) Observe that if we take $M=A$, then $M\otimes N\cong N$ and this agrees with the conclusion of
Theorem \ref{tensor}: in fact, in this trivial case, $s=1$ and we have
\begin{equation*}
Syz(M\otimes N)\cong Syz(N);
\end{equation*}{\small
\begin{equation*}
Syz(M\otimes N)=_A\langle [Syz(M)\otimes I_t\mid I_s\otimes Syz(N)]\rangle=_A\langle [0\otimes
I_t\mid I_1\otimes Syz(N)]\rangle\cong Syz(N).
\end{equation*}}
Thus, a presentation of $M\otimes N$ is
\begin{equation*}
N\cong M\otimes N\cong A^{st}/Syz(M\otimes N)=A^{t}/Syz(A\otimes N)\cong A^t/Syz(N).
\end{equation*}
(ii) For $M=A$ and $r\geq 1$, $Tor_r^A(M,N)=0$ and this agrees with (\ref{tor1}) and (\ref{tor2})
since in this case $s=1$.

(iii) Taking $M=A$ in $Hom_A(M,N)$ we have $Hom_A(M,N)\cong N$ (isomorphism of right $A$-modules);
on the other hand, from Theorem \ref{homtheorem} we get the isomorphism of right $A$-modules
\begin{equation*}
Hom_A(M,N)\cong \langle U\rangle_A/\langle I_1\otimes \Delta_N\rangle_A=\langle U\rangle_A/\langle
\Delta_N\rangle_A,
\end{equation*}
where the columns of the matrix $U$ are defined by
\begin{center}
$\text{\textit{columns of\, U} }=\text{\textit{first\, t} }\,\,  \text{\textit{coordinates of
generators of}}$

$Syz^r\,([(\Delta_M\otimes I_t)^{T}|I_{s_1}\otimes \Delta_N)])=Syz^r\,([(0\otimes
I_t)^{T}|I_{0}\otimes \Delta_N)])= Syz^r\,(\Delta_N)=Syz^r\,([(0\otimes I_t)^{T}|0\otimes
\Delta_N)])= Syz^r\,(0)=A^t$.
\end{center}
Thus, $N\cong Hom_A(M,N)\cong A^t/\langle\Delta_N\rangle_A$ is the finite presentation of $N$ as
right $A$-module.

(iv) If $M=A$, then $Ext_A^r(M,N)=0$ for $r\geq 1$ and this agrees with (\ref{ext2}) and
(\ref{ext3}) since in this case $f_i=0$ and $F_i=0$ for $i\geq 1$.
\end{remark}

\section{Some applications}

\noindent As application of the computations of $Hom$ and $Ext$ we will test stably-freeness,
reflexiveness, and we will compute also the torsion, the dual and the grade of a given submodule of
$A^m$. For these applications the centralizing bimodule we need is trivial, $A=_A\langle
1\rangle_A$. We will illustrate these applications with examples, and for this we will use some
computations we found in \cite{gallego-thesis}.

\begin{example}\label{exmp1}
The first application is a method for testing stably-freeness. It is well known that if $S$ is a
ring and $M$ is a $S$-module with exact sequence $0\rightarrow
S^s\xrightarrow{f_1}S^r\xrightarrow{f_0}M\rightarrow 0$, $M$ is stably-free if and only if
$Ext_S^{1}(M,S)=0$ (see \cite{Chyzak3}). We can illustrate this method with the following example:
Consider the skew $PBW$ extension $A:=\sigma(\mathbb{Q})\langle x,y\rangle$, with $yx=-xy$. We will
check if the following left $A$-module is stably-free
\begin{center}
$M:=\langle
\textbf{\emph{e}}_3+\textbf{\emph{e}}_1,\textbf{\emph{e}}_4+\textbf{\emph{e}}_2,x\textbf{\emph{e}}_2+
x\textbf{\emph{e}}_1,y\textbf{\emph{e}}_1,y^2\textbf{\emph{e}}_4,x\textbf{\emph{e}}_4+y\textbf{\emph{e}}_3
\rangle$.
\end{center}
On $Mon(A)$ we consider the deglex order with $x\succ y$ and the $TOP$ order on $Mon(A^4)$ with
$\textbf{\emph{e}}_4>\textbf{\emph{e}}_3>\textbf{\emph{e}}_2>\textbf{\emph{e}}_1$; according to
\cite{gallego-thesis}, a system of generators for $Syz(M)$ is
\[S=\{(0,-xy^2,y^2,-xy,x,0), (-y^2,xy,y,x+y,0,y), (y^3,0,0,-y^2,x,-y^2)\}.\]
From this we get that a finite presentation of $M$ is given by
\begin{equation*}
\begin{CD}
A^3 @>{F_1}>> A^6@>{F_0}>>M\longrightarrow 0,
\end{CD}
\end{equation*}
where
\begin{center}
$F_1:=\begin{bmatrix}0 &-y^2&y^3 \\ -xy^2&xy&0\\
y^2&y&0\\-xy&x+y&-y^2\\x&0&x\\0&y&-y^2\end{bmatrix}$ and  $F_0:=\begin{bmatrix}
1&0&x&y&0&0\\0&1&x&0&0&0\\1&0&0&0&0&y\\0&1&0&0&y^2&x \end{bmatrix}$.
\end{center}
Applying again the method for computing syzygies we get that $Syz(F_1)=0$ and hence we obtain
\begin{equation*}
\begin{CD}
0\longrightarrow A^3 @>{F_1}>> A^6@>{F_0}>>M\longrightarrow 0.
\end{CD}
\end{equation*}
From (\ref{ext2}), a presentation for $Ext^1_{A}(M,A)$ is given by
\begin{equation*}
Ext^1_{A}(M,A)\cong A^{p_r}/Syz(ker(f_{2}^{*})/Im(f_{1}^{*})),
\end{equation*}
and from (\ref{ext3}) a system of generators for $Syz(ker(f_{2}^{*})/Im(f_{1}^{*}))$ are the first
$p_r$ coordinates of the generators of
\begin{equation*}
Syz^r[Syz^r[I_t\otimes F_{2}^T]|I_t\otimes F_{1}^T],
\end{equation*}
and $p_r$ is the number of generators of $Syz^r[I_t\otimes F_{2}^T]$. In our particular situation
$t=1$ and
\begin{equation*}
F_2=\begin{bmatrix} 0 \\ 0 \\ 0 \end{bmatrix},
\end{equation*}
whence $Syz^r[I_t\otimes F_{2}^T]=I_3$ and $p_r=3$. Therefore,
\begin{eqnarray*}
Syz^r[Syz^r[I_t\otimes F_{2}^T]|I_t\otimes F_{1}^T]=Syz^r\begin{bmatrix}1 & 0 & 0 &  0   & -xy^2 & y^2 & -xy & x & 0 \\
0 & 1 & 0 & -y^2 & xy    & y   & x+y & 0 & y \\
0 & 0 & 1 & y^3  & 0 & 0 & -y^2 & x & -y^2 \end{bmatrix}.
\end{eqnarray*}
In order to compute this right syzygy, we first have to compute a Gröbner basis $G$ for
$$F:=\begin{bmatrix}1 & 0 & 0 &  0   & -xy^2 & y^2 & -xy & x & 0 \\
                                                                           0 & 1 & 0 & -y^2 & xy    & y   & x+y & 0 & y \\
                                                                           0 & 0 & 1 & y^3  & 0 & 0 & -y^2 & x & -y^2 \end{bmatrix},$$
but this task is trivial since the first three vectors are the canonical vectors of $A^3$, so
$G=\{\textbf{\emph{e}}_1,\textbf{\emph{e}}_2,\textbf{\emph{e}}_3\}$ and hence $Syz^r(G)=0$.
According to the procedure for computing $Syz^r(F)$ (the right version of the procedure for
computing syzygies, see \cite{gallego-thesis} and \cite{Jimenez2}), we have $G=FH$, $F=GQ$, with
\begin{equation*}
H=\begin{bmatrix}
1 &0 & 0 \\
0 &1 & 0 \\
0 &0 & 1 \\
0 &0 & 0 \\
0 &0 & 0 \\
0 &0 & 0 \\
0 &0 & 0 \\
0 &0 & 0 \\
0 &0 & 0 \\
\end{bmatrix}
,\, Q=F,
\end{equation*}
and
\begin{center}
$Syz^r(F)=\begin{bmatrix}HSyz^r(G)|I_s-HQ\end{bmatrix}=I_s-HQ$.
\end{center}
Thus,
\begin{equation*}
I_9 - HQ=\begin{bmatrix}
0    &0    &0     &0    &  xy^2 & -y^2 & xy & -x & 0   \\
0    &0    &0     &y^2  &  -xy  & -y    & -x-y & 0 & -y  \\
0    &0    &0     &-y^3 &  0 & 0  & y^2 & -x& y^2 \\
0    &0  &0 &1    &  0 & 0     & 0 & 0& 0    \\
0 &0  &0     &0    &  1 & 0     & 0 & 0& 0    \\
0  &0   &0     &0    &  0 & 1     & 0 & 0& 0   \\
0   &0 &0   &0    &  0 & 0     & 1 & 0& 0  \\
0   &0    &0   &0    &  0 & 0     & 0 & 1& 0   \\
0    &0   &0   &0    &  0 & 0     & 0 & 0& 1     \\
\end{bmatrix},
\end{equation*}
and
\begin{equation*}
Ext^1_{A}(M,A)\cong A^3/\langle S\rangle_A,
\end{equation*}
where{\footnotesize
\begin{center}
$S:=\{(0,y^2,-y^3),(xy^2,-xy,0),(-y^2,-y,0),(xy,-x-y,y^2),(-x,0-x),(0,-y,y^2)\}$.
\end{center}}
This proves that $Ext^1_{A}(M,A)\neq 0$, and hence, $M$ is not stably-free.
\end{example}

\begin{example}\label{exmp2}
(i) The second application consists in using Theorem \ref{homtheorem} in order to compute a
presentation for the dual right $A$-module $M^{*}=Hom_A(M,A)$, where $M$ is a left $A$-submodule of
$A^m$. Let $M \subseteq A^4$ be as in the previous example. By (\ref{hom}),
\begin{equation*}
Hom_A(M,A)\cong \langle U\rangle_A/\langle I_6\otimes
\Delta_A\rangle_A,
\end{equation*}
where $\langle U\rangle_A$ is the right $A$-module generated by the columns of the matrix $U$, and
the columns of $U$ are the first $6$ coordinates of the generators of
$$Syz^r\,([(\Delta_M\otimes I_1)^{T}|I_{3}\otimes \Delta_A)]),$$
with $\Delta_M=F_1$ and $\Delta_A=[1]$. Thus, we have to compute $Syz^r\,([F_1^T|I_{3}])$. In a
similar manner as in the the previous example, $Syz^r\,([F_1^T|I_{3}])$ is generated by the columns
of $I_9 - HQ$, where
\begin{center}
$Q=F:=\begin{bmatrix}0 & -xy^2 & y^2 & -xy & x & 0 & 1 & 0 & 0\\
-y^2 & xy & y & x+y & 0 & y & 0 & 1 & 0\\
y^3 & 0 & 0 & -y^2 & x & -y^2 & 0 & 0 & 1\end{bmatrix}$
\end{center}
and
\begin{equation*}
H=\begin{bmatrix}
0 & 0 & 0\\
0 & 0 & 0\\
0 & 0 & 0\\
0 & 0 & 0\\
0 & 0 & 0\\
0 & 0 & 0\\
1 & 0 & 0\\
0 & 1 & 0\\
0 & 0 & 1
\end{bmatrix}
\end{equation*}
Therefore,
\begin{equation*}
I_9 -HQ=\begin{bmatrix}
1 & 0 & 0 & 0 & 0 & 0 & 0 & 0  & 0 \\
0 & 1 & 0 & 0 & 0 & 0 & 0 & 0  & 0 \\
0 & 0 & 1 & 0 & 0 & 0 & 0 & 0  & 0 \\
0 & 0 & 0 & 1 & 0 & 0 & 0 & 0  & 0 \\
0 & 0 & 0 & 0 & 1 & 0 & 0 & 0  & 0 \\
0 & 0 & 0 & 0 & 0 & 1 & 0 & 0  & 0 \\
0 & xy^2 & -y^2 & xy & -x & 0 & 0 & 0 & 0 \\
y^2 & -xy & -y & -x-y & 0 & -y & 0 & 0 & 0 \\
-y^3 & 0 & 0 & y^2 & -x & y^2 & 0 & 0 & 0
\end{bmatrix}.
\end{equation*}
Hence, $U=I_6$ and $M^*=Hom_A(M,A)=0$.

(ii) Our next application is about the grade of a module. Let $S$ be a ring, the \textit{grade}
$j(M)$ of a left $S$-module $M$ is defined by
\begin{center}
$j(M):=\min\{i|Ext_S^i(M,S)\neq 0\}$,
\end{center}
or $\infty$ if no such $i$ exists. Therefore, we can compute the grade of a given submodule
$M\subseteq A^m$. Thus, according to (i) and the previous example, $Ext^0_A(M,A)=0$ but
$Ext^1_A(M,A) \neq 0$, where $A=\sigma(\mathbb{Q})\langle x,y\rangle$, with $yx=-xy$ and $M=\langle
\textbf{\emph{e}}_3+\textbf{\emph{e}}_1,\textbf{\emph{e}}_4+\textbf{\emph{e}}_2,x\textbf{\emph{e}}_2+
x\textbf{\emph{e}}_1,y\textbf{\emph{e}}_1,y^2\textbf{\emph{e}}_4,x\textbf{\emph{e}}_4+y\textbf{\emph{e}}_3
\rangle \subseteq A^4$. Whence, $j(M)=1$.

(iii) Now suppose that the ring $R$ of coefficients of the extension $A$ is a noetherian domain
(left and right). Then $A$ is also a noetherian domain (see \cite{Lezama3}). Let $M$ be a left
$A$-module given by the presentation $A^s\xrightarrow{F_1} A^r\xrightarrow{F_0} M\rightarrow 0$.
The $Ext$ modules and some results of \cite{Chyzak3} can be used for computing the torsion
submodule $t(M)$, and also they can be applied for testing reflexiveness of $M$ in the particular
case when $M\subseteq A^m$. In fact, Theorems 5 of \cite{Chyzak3} states that $t(M)\cong
Ext_A^1(M^T,A)$, where $M^T$ is the transposed module of $M$, $M^T=A^s/Im(F_1^{T})$, i.e., $M^T$ is
given by the presentation $A^r\xrightarrow{F_1^T} A^s\rightarrow M^T\rightarrow 0$. From this we
get that $M$ is torsion-free if and only if $ Ext_A^1(M^T,A)=0$, and also, $M$ is a torsion module
if and only if $ Ext_D^1(M^T,A)=M$.

Let $M$ and $A$ be again as in Example \ref{exmp1}. For $M^T$ we have the finite presentation
\begin{equation*}
\begin{CD}
0\longrightarrow A^6 @>{F_1^T}>> A^3@>{}>>M^T\longrightarrow 0;
\end{CD}
\end{equation*}
moreover,
\begin{equation*}
Syz^r[I_1\otimes F_{2}^T]=Syz^r[0]=I_6,
\end{equation*}
so
\begin{equation*}
Syz^r[Syz^r[I_t\otimes F_{2}^T]|I_1\otimes (F_{1}^T)^T]= Syz^r[I_6 \vert F_1].
\end{equation*}
Reasoning as in Example \ref{exmp1} (see also (i)), in this case we have $Q=F:=[I_6\vert F_1]$ and
$H=\begin{bmatrix}I_6\\ 0\end{bmatrix}$, where $0$ is a null matrix of size $3\times 6$, so we
compute
\begin{equation*}
I_9 - HQ=\begin{bmatrix}
0    &0    &0     &0    &  0 & 0     & 0 & y^2& -y^3   \\
0    &0    &0     &0    &  0 & 0     & xy^2 & -xy & 0  \\
0    &0    &0     &0    &  0 & 0     & -y^2 & -y & 0  \\
0    &0    &0     &0    &  0 & 0     & xy & -x-y& y^2    \\
0    &0    &0     &0    &  0 & 0     & -x & 0& -x    \\
0    &0    &0     &0    &  0 & 0     & 0 & -y & y^2   \\
0    &0    &0     &0    &  0 & 0     & 1 & 0 & 0  \\
0    &0    &0     &0    &  0 & 0     & 0 & 1 & 0  \\
0    &0    &0     &0    &  0 & 0     & 0 & 0 & 1
\end{bmatrix}.
\end{equation*}
$Syz^r(F)$ is generated by the columns of $I_9 - HQ$ and $Ext^1_A(M^T,A) \cong A^6/J$, where $J$ is
the right $A$-module generated by the first six coordinates of the generators of $Syz^r(F)$, so
$J=Syz^r(M)$, and hence
\begin{equation*}
Ext^1_A(M^T,A) \cong A^6/Syz^r(M) \cong M.
\end{equation*}
(iv) Observe that another interesting method for checking whether $M$ is a torsion module is
Corollary 1 of \cite{Chyzak3}: $M$ is a torsion module if and only if $M^*=0$. Let $M$ and $A$ be
as in the previous examples; in (i) we showed that $M^*=0$, so $M$ is a torsion module and this
agrees with (iii).

(v) Finally, recall that $M$ is \textit{reflexive} if $(M^*)^*\cong M$. Theorem 6 of \cite{Chyzak3}
proves that $M$ is reflexive if and only if $Ext_A^i(M^T,A)=0$, for $i=1,2$. According to (iii),
$M$ is not reflexive.
\end{example}



\begin{flushright}
Departamento de Matemáticas\\
Universidad Nacional de Colombia\\
Bogotá, Colombia\\
\textit{e-mail}: \texttt{jolezamas@unal.edu.co}\\
\end{flushright}
\end{document}